\documentclass{amsart}
\usepackage[T1]{fontenc}
\newtheorem{theorem}[subsection]{Theorem}

\title{Actions of a group of prime order without equivariantly simple germs}
\author{Ivan Proskurnin}
\date{}

\begin{document}
\maketitle

\begin{abstract}

We prove that equivariantly simple invariant singularities can only exist for very few representations of a group of prime order: for real representations and some ``almost, but not quite real'' representations.
\end{abstract}

\section{Introduction}

One of the most famous results in singularity theory is the classification of simple singularities obtained by V. I. Arnold in 1972 (see \cite{1}). It turns out that the simple singularities are in one-to-one correspondence with Dynkin diagrams with no double or triple edges (ADE-diagrams).  There are multiple connections between simple singularities and other objects admitting an ADE-classification (quadratic forms, Coxeter groups, etc), and the nature of these connections is still quite mysterious. Equivariantly simple singularities are invariant (under some group action) function germs simple with respect to equivariant changes of coordinates. It was also V.I. Arnold (\cite{2}) who discovered that equivariantly simple singularities are similarly connected to Dynkin diagrams --- he demonstrated that $\mathbb{Z}_2$-invariant singularities for certain group actions are in bijection with Dynkin diagrams with double edges. Recently obtained classifications of $\mathbb{Z}_2$ and $\mathbb{Z}_3$-invariant equivariantly simple singularities (\cite{3,4}) show a similar correspondence.

\medskip

One of the challenges that arise when we attempt to calculate a list of simple singularities for a certain group action is figuring out if simple singularities exist at all. In some simple scenarios the nonexistence of simple singularities can be proven by a dimension counting argument. E.g., let $\mathbb{Z}_k$ act on  $\mathbb{C}^n$ by multiplying all the variables by the same primitive root of unity of degree $k$. Then the dimension of the space of homogeneous invariants of the smallest degree (i.e. the homogeneous polynomials of degree $k$) is $C_{n+k-1}^k$ and the dimension of the group of linear changes of variables acting on it is $n^2$. Since for $k\geq4, n\geq2$ the former value is greater than the latter, the invariant functions in this case inevitably have moduli. However, for more complicated representations dimension counting quickly becomes infeasible.

\medskip

In this paper we prove a theorem describing the actions a finite group of prime order for which equivariantly simple function germs may exist.

\begin{theorem} Let  $\tau$ be a linear action of $\mathbb{Z}_p$ on $(\mathbb{C}^n,0)$, $rk(\tau)$ the maximal rank of  a $\tau$-invariant quadratic form, $p$ -- a prime number. Germs equivariantly simple with respect to $\tau$ may only exist in one of the two cases:

1)$det(\tau) \neq 1, n - rk(\tau) \leq log_2 (p+1)$;

2)$det(\tau) =1, n - rk(\tau) \leq log_2 (2p-1)$.

\end{theorem}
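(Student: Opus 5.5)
The plan is to reduce to the non-quadratic part of the representation and then exhibit a continuous modulus by a pigeonhole argument on subset sums of the characters. The obstacle flagged in the last paragraph is a real gap in this plan, not a routine check.

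\textbf{Reduction.} Since $\mathbb{Z}_p$ is abelian and finite, I diagonalize $\tau$: the generator acts by $x_i\mapsto \varepsilon^{a_i}x_i$ with $\varepsilon=e^{2\pi i/p}$ and $a_i\in\mathbb{Z}_p$. An invariant quadratic form pairs weight $a$ with weight $-a$, and weight $0$ with itself. Hence $rk(\tau)$ equals the number of coordinates of weight $0$ plus $2\sum_{a\neq 0}\min(m_a,m_{-a})$, where $m_a$ is the multiplicity of weight $a$. (For $p=2$ every weight equals its own negative, so $rk(\tau)=n$ and the statement is trivial; from now on $p$ is odd.)

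I split $\mathbb{C}^n=V\oplus W$, where $V$ carries the paired weights, $\dim V = rk(\tau)$, and $W$ carries the $k:=n-rk(\tau)$ unpaired weights. The unpaired weights form a multiset $S$ of nonzero residues with $S\cap(-S)=\emptyset$. By the equivariant splitting lemma (the Morse lemma with parameters, averaged over the group), an equivariantly simple germ on $\mathbb{C}^n$ is equivalent to $Q(y)+g(z)$, with $Q$ a nondegenerate invariant quadratic form on $V$ and $g$ an equivariantly simple germ on $W$ with zero $2$-jet. So it suffices to show: if $k>\log_2(p+1)$ (when $\det\tau\neq1$) or $k>\log_2(2p-1)$ (when $\det\tau=1$), then every invariant germ on $W$ is adjacent to a family with a continuous modulus. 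Here I will have to check how $\det\tau$ relates to $\det(\tau|_W)$; the paired weights contribute $\varepsilon^{a-a}=1$, so the two determinants agree.

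\textbf{Pigeonhole on subset sums.} For each subset $A\subseteq\{1,\dots,k\}$ put $\sigma(A)=\sum_{i\in A}a_i\in\mathbb{Z}_p$; the square-free monomial $z^A$ has character $\sigma(A)$. There are $2^k$ subsets. If $2^k>p+1$, I expect to find two distinct nonempty subsets $A\neq B$ with $\sigma(A)=\sigma(B)$; this needs a little care with the empty set and the full set. If $\det\tau=1$, then $\sigma(\{1,\dots,k\})=0$, so complements give the trivial coincidences $\sigma(A^c)=-\sigma(A)$. These must be discarded, which is why the threshold worsens to $2p-1$.

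From a genuine coincidence I build invariant monomials $z^A\,m$ and $z^B\,m$ sharing a common invariant-completing factor $m$. For instance take $m=z_j^{\,p-\sigma(A)/a_j}$-type corrections, or more naturally products $z^A z^{B'}$ with $\sigma(B')=-\sigma(A)$. I would arrange this so that they lie in one quasi-homogeneous stratum whose dimension exceeds the dimension of the equivariant group orbit there. When the weights in $S$ are distinct, the relevant part of the equivariant group is essentially the torus $(\mathbb{C}^*)^k$ together with unipotent parts. So it suffices to produce $k+1$ monomials of equal quasi-degree whose exponent vectors are affinely independent modulo the torus action. The resulting cross-ratio-type invariant is then a modulus, and every germ in the family is non-simple.

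\textbf{Main obstacle.} The hardest step is making this quasi-homogeneous count rigorous. Concretely, I must choose weights for the grading so that the coincidence $\sigma(A)=\sigma(B)$ yields two extra monomials of the same degree beyond those that the $k$-dimensional torus and the unipotent equivariant substitutions can absorb. I also need to verify that nearby simple germs cannot sit below this stratum. I would first try to show that every invariant germ on $W$ deforms into this stratum, using upper semicontinuity of modality. Then I would check the absorption count separately for $S$ with repeated weights, where the equivariant group contains $GL(m_a)$ blocks; there I expect a direct dimension count in degree $3$ or in the minimal invariant degree to suffice.
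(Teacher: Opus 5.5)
The gap is the step you flag, and it is more than bookkeeping: the plan rests on passing from a subset-sum coincidence to a modulus, and that inference fails.

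\textbf{The coincidence does not produce a modulus.} Take $p=3$ acting on $\mathbb{C}^2$ with weights $(1,1)$. Then $rk(\tau)=0$, $k=2$, and $\sigma(\{1\})=\sigma(\{2\})$ is a coincidence of distinct nonempty subsets. Yet $z_1^3+z_2^3$ has Jacobian algebra spanned by $1,z_1,z_2,z_1z_2$, whose invariant part is spanned by $1$. So $\nu=1$ and the germ is equivariantly stable: $GL_2$, which here is the whole equivariant linear group, has an open orbit on binary cubics. With repeated weights a coincidence exists for every $k$, so the pigeonhole is vacuous there. In general, whatever produces a modulus must be a complete dimension count over the whole weight configuration and the full equivariant jet group. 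That is exactly the computation you have not done.

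\textbf{The thresholds are not derived.} They are imported from the statement. When $\det\tau=1$, pigeonholing the $2^{k-1}$ pairs $\{A,A^c\}$ into the $(p+1)/2$ residues up to sign already gives a non-complementary coincidence once $2^k>p+1$. So your combinatorics does not separate $p+1$ from $2p-1$.

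\textbf{Two further gaps.} To conclude that no germ is simple, you would need every invariant germ on $W$, however degenerate, to lie in the closure of your modular stratum. Upper semicontinuity of modality does not give that. Also, an equivariantly simple germ need not have $2$-jet of rank $rk(\tau)$. The splitting $Q(y)+g(z)$ with $Q$ nondegenerate on $V$ is justified only after passing to a stable germ.

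\textbf{The missing idea: no modulus needs to be constructed.} The paper shows that simple germs exist iff equivariantly stable ones do, since among finitely many orbits covering a neighbourhood one must be open. It also shows that stability is equivalent to $\nu(f)=1$. So it suffices to bound $\mu$ for a germ with $\nu=1$.

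Write such a germ as $f=Q+h$ with $j^2h=0$ on the fixed-point-free kernel; then $\mu(h)\ge 2^k$. This lower bound on $\mu$, not a count of characters of square-free monomials, is where $2^k$ comes from.

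The upper bound is topological. Wall's formula, combined with the structure of $H_{n-1}$ of the Milnor fibre (trivial summands plus copies of the reduced regular representation), gives the following. The representation $\mu_{\mathbb{Z}_p}(h)$ contains $\det\tau$ with some multiplicity $a$ and every other character with a common multiplicity $b$. Hence $\nu(h)=1$ forces $b=1$ if $\det\tau\neq1$, and $a=1$ if $\det\tau=1$.

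Now $h\oplus h$ is invariant under the real fixed-point-free action $\tau_{\mathbb{R}}$, with $\mu(h\oplus h)=\mu(h)^2$ and $\nu(h\oplus h)=a^2+(p-1)b^2$. Roberts' equality for $h\oplus h$ then gives $\mu(h)\in\{p-1,p+1\}$ when $\det\tau\neq1$, and $\mu(h)\in\{1,2p-1\}$ when $\det\tau=1$.
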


Since any smooth action of a finite group is linearizable, the assumption that the group action is linear is not limiting: we will work with linear actions, however all the results can be generalized to analytic group actions in an obvious fashion.
For a real group action $n-rk(\tau)=0$, and in the case of a real action simple singularities exist (an invariant nondegenerate quadratic form has a Morse singularity, which is simple).

If one looks at a list of $\mathbb{Z}_3$-invariant simple singularities obtained in \cite{4}, one can easily see that the simple singuarities only exist in cases described in the theorem.

\section{Definitions, notation  and auxiliary statements}
The letter $p$ will denote a prime number.
In a suitable local system of coordinates any analytic finite group action on $(\mathbb{C}^n,0)$ can be linearized (see \cite{5}). So we wil consider all group actions to be induced by a linear representation of $G$.
Furthermore, any linear representation of a finite abelian group is diagonalazable, hence further we will usually consider only actions of the form $g \circ (x_1,\ldots ,x_n) \longrightarrow (\chi_1(g) x_1,\ldots , \chi_n(g) x_n)$ with $\chi_i$ being group characters of $\mathbb{Z}_p$.

For a function germ $f$ $J_f$ will denote the \textbf{Jacobian ideal} $<\frac{\partial f}{\partial x_1}, \ldots, \frac{\partial f}{\partial x_n}>$.
$Q_f$ is the Jacobian algebra $\mathbb{C}\{x_1, \ldots, x_n\} / J_f$. $Q_f$ always admits a monomial basis, i.e. the basis consisting of monomials.

If $f$ is invariant with respect to an action of a group $G$, the Jacobian ideal is invariant and therefore $G$ also acts on $Q_f$ by changes of coordinates: $g \circ h(x) = h ( g^{-1} (x))$. The representation of $G$ on $Q_f$ is called \textbf{the equvariant Milnor number} of $f$ and denoted $\mu_G(f)$. $\nu(f)$ is the multiplicity of the trivial representation in $\mu_G(f)$, i.e. the dimension of the space of invariant elements of $Q_f$. This subspace of invariant elements is denoted $Q^G_f$.

If $f=x_1^2 + \ldots + x_k^2 + h(x_{k+1}, \ldots, x_n)$ $Q_f$ if isomorphic to $Q_h$. If $f$ is invariant with respect to some group action, they are isomorphic as group representations. In particular, $\mu(f) = \mu(h)$, $\nu(f) = \nu(h)$.

A \textbf{Milnor fiber} of a germ $f$ is the set $V_f = f^{-1} (\epsilon) \cap B_{\delta}, 0<|\epsilon| \ll \delta, \epsilon \; \in \mathbb{C}, \delta \; \in \mathbb{R}$. It is known that (see, e.g.,  \cite{6}) if the Milnor of $f$ is finite $V_f$ has the homotopy type of a wedge sum of $\mu(f)$ $n-1$-dimensional spheres. In particular, $V_f$ only has nontrivial homology groups in dimension $n-1$, $rk(H_{n-1}(V_f)) = \mu$.

A group action on $(\mathbb{C}^n, 0)$  is called \textbf{real} if it admits an invariant quadratic form of rank $n$.

\section{Equivariantly simple and equivariantly stable germs}

There are multiple papers on the classification of equivariantly simple singularities (see., e.g., \cite{2}, \cite{3}, \cite{4}, \cite{7}). Roughly speaking, an \textbf{equivariantly stable} singularity is the simplest of the equivariantly simple singularities, i.e. the one that is only adjacent to itself. If the case of real group action the equivariantly stable singularity is the nondegenerate one, as the deformation of a Morse function is again a Morse function. For the formal definitions of equivariant simplicity and stability see below.

We will denote $\mathfrak{m}_G$ the maximal ideal in the ring of $G$-invariant analytic germs. Here we only assume G to be finite, not nessesarily abelian or isomorphic to $\mathbb{Z}_p$. Consider $\mathcal{O}^{r}_{G}$ --- the space of $r$-jets of elements in $\mathfrak{m}_G$ with a critical point at the origin.  The group $D^r_G$ of $r$-jets of  $G$-equivariant diffeomorphisms  $(\mathbb{C}^n,0) \longrightarrow (\mathbb{C}^n,0)$ acts on this space.

A $G$-invariant germ $f:(\mathbb{C}^n,0) \longrightarrow (\mathbb{C},0)$ with a critical point at the origin is \textbf{equivariantly simple} if there is an $N \; \in \; \mathbb{N}$ such that for all $r$ large enough  $r$-jet of $f$ has a neighbourhood in $\mathcal{O}^{r}_{G}$ that only intersects at most $N$ $D^r_G$-orbits.

A $G$-invariant germ $f:(\mathbb{C}^n,0) \longrightarrow (\mathbb{C},0)$ with a critical point at the origin is \textbf{equivariantly stable} if for all large enough $r$  $D^r_G$-orbit of $r$-jet of $f$ in  $\mathcal{O}^{r}_{G}$ is open.

\begin{theorem}
$f$ is equivariantly stable iff $\nu(f)=1$.

\end{theorem}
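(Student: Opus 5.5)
The plan is to show that $\nu(f)$ is the codimension of the $D^r_G$-orbit of $j^r f$ inside $\mathcal{O}^r_G$, and then that the orbit is open exactly when this codimension is $0$. The number $\nu(f)=1$ enters because $\mathcal{O}^r_G$ consists of jets vanishing at the origin. The constants, i.e.\ the class of $1$, are always invariant and never lie in $J_f$ when $f$ has a critical point at $0$, so they contribute the trivial summand counted once in $\nu(f)$.

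First I will compute the tangent space to the orbit. Differentiating a one-parameter family of equivariant diffeomorphisms $\phi_t$ gives a $G$-equivariant vector field $v$ with $v(0)=0$, and the velocity of $f\circ\phi_t$ is $\sum v_i\,\partial f/\partial x_i$. So the tangent space at $f$ is the set of such expressions with $v$ an equivariant vector field vanishing at $0$. The key step is to identify it with $(\mathfrak{m}J_f)^G$. For this I use the Reynolds operator $R=\frac{1}{|G|}\sum_g g$. For any vector field $w$ we have $R\bigl(\sum w_i\,\partial f/\partial x_i\bigr)=\sum (Rw)_i\,\partial f/\partial x_i$, because $f$ is invariant and $df$ is therefore equivariant. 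Hence averaging turns any invariant element of $\mathfrak{m}J_f$ into one of the form above, and the tangent space equals $(\mathfrak{m}J_f)^G$.

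The tangent space to $\mathcal{O}_G$ at $f$ is $\mathfrak{m}_G$ restricted to germs with a critical point, i.e.\ $(\mathfrak{m}^2)^G$ up to the linear part, which vanishes anyway. So the codimension is $\dim\mathfrak{m}^G/(\mathfrak{m}J_f)^G$, adjusted for the critical-point condition. Taking $G$-invariants is exact since $G$ is finite. I then relate this to $Q_f^G=\mathcal{O}^G/J_f^G$, which has dimension $\nu(f)$. The constant $1$ accounts for one dimension, so the invariant part of $\mathfrak{m}/J_f$ has dimension $\nu(f)-1$. It remains to compare $\mathfrak{m}J_f$ with $J_f$ on the invariant parts: the extra part is spanned by invariant combinations of the $\partial f/\partial x_i$ with constant coefficients. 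These are nonzero only for coordinate directions $x_i$ on which $G$ acts trivially, and those correspond to the linear invariants, which are killed by the critical-point condition. The cleanest route is to run the whole computation in the space of germs with critical point, so that these terms cancel and the codimension is exactly $\nu(f)-1$. I expect this bookkeeping with $\mathfrak{m}$ versus $\mathfrak{m}^2$ and the treatment of trivial characters to be the main technical obstacle. I will verify it on $f=x^2$ with trivial $G$, where $\nu=1$ and the orbit must be open.

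Finally I will pass from tangent spaces to orbits using finite determinacy. If $\nu(f)<\infty$, the equivariant version of Mather's lemma gives an $r$ with $\mathfrak{m}^{r+1}\cap\mathcal{O}^G\subset$ the tangent space to the orbit, so $f$ is $r$-determined. Then for large $r$ the orbit of $j^rf$ is an orbit of an algebraic group action, a smooth locally closed submanifold whose tangent space is computed above. It is open iff its codimension is $0$, i.e.\ iff $\nu(f)=1$. If $\nu(f)=\infty$, the codimension grows without bound with $r$, so the orbit is not open and $\nu(f)\neq1$. This covers both implications.
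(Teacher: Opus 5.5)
Your argument is correct. For the implication $\nu(f)=1\Rightarrow$ stable it takes a genuinely different route from the paper. The paper first obtains $\mu(f)<\infty$ from the separation property of invariants. It then quotes Slodowy's theorem that $f+\sum_j\lambda_jh_j$ ($h_j$ a basis of $Q_f^G$) is an invariant versal deformation, which for $\nu=1$ is just $f+\lambda$.

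For the other implication the paper uses the same tangent-space argument as you. It loosely calls the tangent space ``the projection of $J_f$''; your Reynolds-operator identification with $(\mathfrak{m}J_f)^G$ is the accurate version.

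Your route needs no versality theorem, only that an orbit of the algebraic group $D^r_G$ is open iff its tangent space fills $\mathcal{O}^r_G$. It also gives the sharper fact that for $\nu(f)<\infty$ and large $r$ the orbit has codimension $\nu(f)-1$.

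To make the bookkeeping airtight, let $t$ be the number of linear invariants and $s\le t$ the dimension of the span of the invariant partials $\partial f/\partial x_i$ modulo $\mathfrak{m}J_f$. For $\nu(f)<\infty$ your computation gives germ-level codimension $(\nu(f)-1)-(t-s)$.

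For $\nu(f)=1$ this already forces $(\mathfrak{m}^2)^G=(\mathfrak{m}J_f)^G$. So the orbit is open at every level $r$, and no determinacy argument is needed.

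For the converse you do need $s=t$, i.e.\ linear independence of those partials modulo $\mathfrak{m}J_f$. This holds because $\nu(f)<\infty$ makes $J_f$ $\mathfrak{m}$-primary, since $\mathbb{C}\{x_1,\ldots,x_n\}$ is finite over its invariant subring. An $\mathfrak{m}$-primary ideal needs $n$ generators, so $\partial_1f,\ldots,\partial_nf$ are minimal generators. This is exactly the finiteness the paper extracts from the separation property.

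Likewise, your claim that the codimension is unbounded when $\nu(f)=\infty$ rests on Krull's intersection theorem, i.e.\ closedness of $(\mathfrak{m}J_f)^G$ in the $\mathfrak{m}$-adic topology. The paper uses the same fact tacitly when it passes from ``$J_f$ covers $\mathcal{O}^r_G$ for all large $r$'' to ``$J_f\supseteq\mathfrak{m}_G$''.
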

\begin{proof}
Let $f$ be equivariantly stable. As the orbit of $r$-jet of $f$ is open for large enough $r$ the tangent space to the orbit has dimension equal to the dimension of $\mathcal{O}^{r}_{G}$ itself. The tangent space to the orbit  at $r$-jet of $f$ is the projection of the jacobian ideal of $f$ to  $\mathcal{O}^{r}_{G}$. But if for all $r$ large enough the projection of $J_f$ covers all  $\mathcal{O}^{r}_{G}$ $J_f$ contains $\mathfrak{m}_G$. Since $f$ has a critical point at the origin  $J_f$ does not contain 1 and $\nu(f)=1$.

Now assume $\nu(f)=1$. The Milnor number of $f$ is finite, as the invariants of a finite group action have the separating property, i.e. for any two orbits of the $G$-action there is an invariant function that is equal to 1 on the first orbit and 0 on the second. But if $\nu(f)=1$, then $J_f = \mathfrak{m}_G$ and any invariant function takes the same value at any point of the singular locus of $f$ as it does at the origin. So the separating property fails if the critical point of $f$ is non-isolated. Since  $f$ has a finite Milnor number we can apply the Slodowy theorem (see \cite{8}), according to which the invariant versal deformation of $f$ is equal to $f + \lambda_1 h_1 + \ldots + \lambda_{\nu} h_{\nu}$, $h_j$ --- linearly independent invariant elements of $Q_f$. Since $\nu(f)=1$, the versal deformation of $f$ is trivial, i.e. has the form $f_{\lambda} = f + \lambda$, and the equivariant stability follows immediately.
\end{proof}
In particular for the nondegenerate singularity $\mu(f)=\nu(f)=1$, as $Q_f$ is equal to $\mathbb{C}$.

\begin{theorem}The existence of equivariantly simple singularities for a given group action is equivalent to the existence of equivariantly stable singularities.
\end{theorem}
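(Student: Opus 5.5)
One direction is immediate from the definitions. If $f$ is equivariantly stable, then for all large $r$ the $D^r_G$-orbit of the $r$-jet of $f$ is open in $\mathcal{O}^{r}_{G}$. That orbit is itself a neighbourhood of the jet meeting exactly one orbit, so $f$ is equivariantly simple with $N=1$. The real work is the converse: if some $G$-invariant germ $f$ is equivariantly simple, we must produce an equivariantly stable germ. By the previous theorem this means producing an invariant germ $g$ with a critical point at the origin and $\nu(g)=1$.

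I would first show that an equivariantly simple $f$ has finite Milnor number, or at least finite $\nu(f)$. The argument is the standard one: germs with infinite (equivariant) codimension lie in a stratum that is adjacent to families with moduli. Concretely, if $\nu(f)=\infty$, then for every $r$ the tangent space to the orbit, namely the projection of $J_f\cap\mathfrak{m}_G$, has codimension growing unboundedly in $r$. A perturbation $f+\sum t_i h_i$ by invariant monomials of high degree then sweeps out a family whose dimension exceeds the dimension of any finite union of orbits nearby. This would contradict simplicity. With $\nu(f)<\infty$, I would use finite determinacy (equivariant version) to work in a fixed jet space. I would then apply the Slodowy theorem quoted above: the invariant versal deformation is $F_\lambda=f+\lambda_1h_1+\ldots+\lambda_\nu h_\nu$, with $h_1=1$.

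Next I would pick a generic point $\lambda$ near $0$ and look at the critical points of $F_\lambda$ at the origin, and more generally at $G$-fixed points. The deformation is versal, so every nearby invariant germ (at the origin) is equivalent to some $F_\lambda$. Simplicity means that only finitely many orbit types occur in the base. Now take the orbit type $O$ of minimal $\nu$ among the germs appearing near $f$, i.e. among the $F_\lambda$ restricted near the origin with a critical point there. Its locus in the base of the versal deformation is constructible and $G$-invariant. Since there are finitely many types, the union of the loci with $\nu>\nu(O)$ has positive codimension. Each stratum of type $T$ has codimension $\nu(T)-1$ in the $\lambda$-space: this is the standard comparison of versal deformations, using the versal family of a germ of type $T$ at its own point. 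A germ $g$ of minimal type then has a neighbourhood containing only germs of types adjacent to it, and by minimality these are of its own type. So its orbit is open in the relevant jet space, hence $g$ is equivariantly stable and $\nu(g)=1$. An alternative route is to note directly that simplicity forces some nearby germ to have $\nu=1$, since the open dense complement of finitely many strata of positive codimension must be a single orbit.

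The main obstacle is making the stratum-codimension argument rigorous. One must ensure that a generic member $F_\lambda$ still has a critical point at the origin, so that it lies in $\mathcal{O}^{r}_{G}$ rather than being a regular invariant germ. I would handle this by restricting to $\lambda$ with $F_\lambda(0)=0$ and noting that invariance forces $dF_\lambda(0)$ to vanish on the non-trivial isotypic part. In the degenerate case where the trivial character occurs in $\tau$, the stable germ is instead the linear invariant coordinate, so that case is handled separately.
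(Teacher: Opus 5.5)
Your easy direction matches the paper's. For the converse, the idea that carries the proof is the one you mention only in passing as an ``alternative route''. For $r$ large, a neighbourhood of $j^r f$ in the finite-dimensional space $\mathcal{O}^{r}_{G}$ is covered by at most $N$ orbits of $D^r_G$. These orbits are immersed submanifolds, and a finite union of submanifolds of positive codimension has empty interior, so one orbit must be open. The paper uses exactly this, directly in jet space. It needs neither $\nu(f)<\infty$, nor Slodowy's versal deformation, nor a stratification of its base.

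Your main route, by contrast, rests on steps that do not hold as written.
\begin{itemize}
\item Your argument that a simple germ has $\nu(f)<\infty$ fails. No family in $\mathcal{O}^{r}_{G}$ can have dimension exceeding that of an open orbit, and nothing prevents one of the finitely many nearby orbits from being open. So a large codimension of the orbit of $f$ itself contradicts nothing. You would instead have to play the uniform bound $N$ against many distinct values of $\nu$ near $f$.
\item The claim that the type-$T$ stratum has codimension $\nu(T)-1$ is asserted, not proved.
\item Your minimal-type step silently identifies ``same $\nu$'' with ``same orbit''.
\item The trivial-character case is mishandled. A linear invariant coordinate has no critical point at $0$, so it is not an element of $\mathcal{O}^{r}_{G}$ and cannot be the stable germ. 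In the direct jet-space argument no case distinction is needed at all.
\end{itemize}

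What is genuinely missing is the passage from ``the orbit of $j^{r}g$ is open for one value of $r$'' to ``$g$ is equivariantly stable'', i.e.\ its orbit is open for \emph{all} large $r$. A priori, the germ with an open orbit could change with $r$. The paper handles this by fixing $r_0\geq |G|$ and using Noether's bound. The invariant ring is generated by homogeneous invariants of degree at most $|G|\le r_0$. Hence every invariant of degree $r>r_0$ is a combination of products of invariants of lower positive degree. Therefore the fact that all invariants of degree $\le r_0$ lift into the tangent space of the orbit propagates to every degree; in module language this is a Nakayama argument over the ring of invariants. In your proposal this step is covered only by the unproved codimension formula and an unspecified uniform determinacy bound. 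If you drop the versal-deformation detour and combine the direct orbit count at a fixed level $r_0\ge|G|$ with this Noether step, you get a complete proof, which is the paper's.
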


\begin{proof}
In one direction this statement is self-evident as stable singularities are by definition simple.

Let $f$ be an equivariantly simple singularity. Choose $r_0 \geq |G|$ such that a neighbourhood of $r_0$-jet of $f$ intersects with only a finite number of $D^{r_0}_G$-orbits. Each of the orbits is a submerged manifold, and since a finite number of orbits cover a neighbourhood of $j^{r_0} (f)$ one of the orbits must be open. Let $g$ be the germ with an open orbit. The tangent space to the orbit of $j^{r_0} (g)$ has the maximal dimension (equal to the dimension of $\mathcal{O}^{r_0}_{G}$). Since the tangent space is the projection of the Jacobian ideal of $g$ to  $\mathcal{O}^{r_0}_{G}$ for each invariant polynomial $q$ of degree $\leq r_0$ there is an $\tilde{q} \; \in \; J_g$ with $j^{r_0} (\tilde{q}) = q$. According to Noether's theorem (see, e.g., \cite{9}) the ring of invariant polynomials can be generated by homogeneous polynomials of degree at most $|G|$,  so for each $r>r_0$ and any invariant polynomial $q$ of degree  $r$ there is also a $\tilde{q} \; \in \; J_g$ with $j^{r} (\tilde{q}) = q$, i.e. the orbit of $g$ is open for all $r>r_0$.

\end{proof}

Therefore we can prove the nonexistence of equivariantly simple singularities by proving the nonexistence of invariant germs with $\nu = 1$. This is precisely what will be done.

\section{Real group actions and Roberts' (in)equality}

The next two theorems are taken from \cite{10}.

\begin{theorem}
If $f$ is invariant with respect to a real action of a finite group $G$ then there is a $G$-invariant deformation $f_{\lambda}$ of $f$ with only nondegenerate critical points.
\end{theorem}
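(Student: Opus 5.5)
The plan is to perturb $f$ by a small generic invariant quadratic form and then apply an equivariant parametric transversality argument. Since the action is real, there is a $G$-invariant nondegenerate quadratic form $Q$ of rank $n$. Averaging over $G$ shows that the space $V$ of $G$-invariant polynomials of degree at most $2$ (linear invariants plus invariant quadratic forms) contains $Q$. The candidate deformation is
$$f_{\lambda,a} = f + \lambda Q + \sum_j a_j h_j,$$
where the $h_j$ run over a basis of $V$ (or, more generally, over invariant polynomials of degree $\leq 2$ together with invariant monomials of low degree). We want to show that for generic small parameters all critical points of $f_{\lambda,a}$ near the origin are nondegenerate.

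\textbf{Step 1: the fixed-point set.} On the fixed-point set $\mathrm{Fix}(G)$ the restriction of the family contains all affine-linear functions, because invariant linear forms restrict to all linear forms on $\mathrm{Fix}(G)$. So the standard Morsification argument (Sard applied to the gradient map) gives nondegeneracy of the critical points lying on $\mathrm{Fix}(G)$.

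\textbf{Step 2: general points.} At a critical point $x$ with stabilizer $H$, the gradient of an invariant function is $H$-equivariant and lies in the $H$-fixed subspace. By Palais' principle of symmetric criticality, critical points of the restriction to the stratum of orbit type $H$ are critical points of $f_{\lambda,a}$. Nondegeneracy then splits into two parts:
\begin{itemize}
\item the Hessian along the stratum, which is handled by Sard applied to the family of restrictions to the smooth stratum;
\item the Hessian in the normal directions, which decomposes into $H$-isotypic pieces.
\end{itemize}

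\textbf{Step 3: the main obstacle.} The hard part is the normal directions, in particular for points in the fixed locus of a subgroup, where invariant functions need not supply enough linear terms. Here reality is essential. Because every irreducible constituent of the representation is real (of real type), each isotypic block of an invariant Hessian is a symmetric form of the shape $q\otimes \mathrm{id}$ that can be made nondegenerate. Adding $\lambda Q$ with $Q$ positive definite in real coordinates shifts these normal blocks by $\lambda\cdot\mathrm{id}$. For all but finitely many small $\lambda$, this makes the normal Hessian invertible at the finitely many critical points near $0$; the finiteness follows from finiteness of the Milnor number after the perturbation, or from semicontinuity.

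\textbf{Step 4: conclusion.} Combining Step 1, Step 2 and Step 3, and taking parameters in the complement of a proper analytic subset, gives an invariant deformation with only Morse critical points.

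I expect Step 3 to be the main obstacle. I would try first to reduce it to the linear-algebra fact that for a real representation the invariant symmetric bilinear forms restricted to each isotypic component include a nondegenerate one. A complex-type component would only admit invariant forms pairing it with its dual, which is exactly what fails in the non-real case.
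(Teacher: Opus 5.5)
There is a genuine gap, centered on the step you flagged. The paper gives no proof of this statement (it quotes it from \cite{10}), so your argument has to stand on its own.

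In Step 3 you treat the critical points as fixed while $\lambda$ varies. But $dQ$ does not vanish along $\mathrm{Fix}(H)$, so adding $\lambda Q$ moves the critical points, and the normal block at the moved points can stay singular for every $\lambda$. The family spanned by invariants of degree $\leq 2$ is also simply too small. Take $S_3$ acting on $\mathbb{C}^2$ by its reflection representation, with $Q=x^2+y^2$ and $J=y(3x^2-y^2)$ the product of the mirror equations, and let $f=Q^k+J^4$ with $k\geq 2$; this is an isolated singularity. The only invariants of degree $\leq 2$ are $1$ and $Q$, so your family is $f+c+\lambda Q$. For every small $\lambda\neq 0$ it has critical points on each mirror where $kQ^{k-1}=-\lambda$. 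There its Hessian is $\phi''(Q)\,dQ\otimes dQ$ with $\phi(s)=s^k+\lambda s$, which vanishes on the normal direction.

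Step 2 fails in the same way. Let $\mathbb{Z}_3$ act on $\mathbb{C}^2$ with weights $(1,2)$; this is real, with $Q=uv$. Take $f=(uv)^k+(u^3-v^3)^3$ with $k\geq 2$, again an isolated singularity. Your family is $f+c+t\,uv$. For every small $t\neq 0$ it has critical points on the lines $u^3=v^3$, where the Hessian is $\psi''(uv)\,d(uv)\otimes d(uv)$ with $\psi(s)=s^k+ts$, so it has rank at most $1$. A one-parameter family cannot run Sard on a two-dimensional stratum, and ``invariant monomials of low degree'' does not specify a family that provably can.

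A smaller point: the constituents of a real action need not be of real type; $\tau_{\mathbb{R}}$ in Section 5 is $\chi\oplus\bar\chi$. What you actually need is only that $Q$ is a nondegenerate $G_x$-invariant form, which is automatic.

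The missing idea is a perturbation family that realizes \emph{all} admissible $2$-jets at \emph{every} point near $0$, not only at the origin. Here is one way to do it.

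Let $h_1,\ldots,h_M$ be a basis of the invariant polynomials of degree at most $3|G|$, and put $f_a=f+\sum a_jh_j$. Suppose $x$ has stabilizer $H$. Hermite interpolation gives a polynomial with a prescribed $H$-invariant $2$-jet at $x$ and zero $2$-jets at the other points of $Gx$. Averaging it over $G$ returns that jet up to the factor $|H|/|G|$. Hence $a\mapsto j^2_xf_a$ is onto the $H$-invariant $2$-jets at $x$.

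The differentials of such jets are exactly the covectors vanishing on the nontrivial $H$-isotypic part. So on the stratum $S_H$ of points with stabilizer $H$, criticality amounts to $\dim\mathrm{Fix}(H)$ conditions. Their Hessians are exactly the $H$-invariant symmetric forms. Among these $Q$ is nondegenerate, so the singular ones form a proper hypersurface $\mathcal{D}_H$; this is the one place reality is used.

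Consider the set of $(x,a)\in S_H\times\mathbb{C}^M$ with $df_a(x)=0$ and $\mathrm{Hess}_xf_a\in\mathcal{D}_H$. It is the preimage, under a submersion, of a set of codimension $\dim\mathrm{Fix}(H)+1$. So it has dimension at most $M-1$, and its image in parameter space has measure zero. Repeating this for the finitely many orbit types and taking $a$ small and generic yields the invariant Morse deformation.
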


The set $C$ of critical points of any invariant deformation $f_{\lambda}$ is $G$-invariant. The action of $G$ on $(\mathbb{C}^n, 0)$ induces a representation  $\theta$ of $G$ on the space of $\mathbb{C}$-valued functions on $C$ ($\theta(g) \circ h(x) = h ( g^{-1} (x))$.

\begin{theorem}If the deformation $f_{\lambda}$ has only nondegenerate critical points then the representations $\theta$ and  $\mu_G(f)$ are isomorphic.
\end{theorem}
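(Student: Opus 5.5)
The plan is to deform the Jacobian algebra $Q_f$ in a flat family over the parameter space of the deformation, and to identify the generic fibre with the space of functions on $C$. The two representations then lie in one continuous family of $G$-representations, so their characters agree.

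Concretely, I fix a representative of $f_\lambda$ on $B\times\Lambda$, where $B$ is a small closed ball around $0\in\mathbb{C}^n$ and $\Lambda$ is a small disc (or polydisc) of parameters. I choose them so that for $\lambda\in\Lambda$ all critical points of $f_\lambda$ in $B$ lie in the interior and tend to $0$ as $\lambda\to 0$; this is possible because $0$ is an isolated critical point of $f=f_0$. Since the deformation is $G$-invariant, $G$ acts on $x$ and trivially on $\lambda$. I then consider the relative Jacobian module
\[
M=\pi_*\bigl(\mathcal{O}_{B\times\Lambda}/\langle \partial f_\lambda/\partial x_1,\ldots,\partial f_\lambda/\partial x_n\rangle\bigr),
\]
where $\pi$ is the projection to $\Lambda$. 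It is a finite $\mathcal{O}_\Lambda$-module, because the relative critical locus is finite over $\Lambda$. Invariance of $f_\lambda$ makes the relative Jacobian ideal $G$-stable, so $G$ acts on $M$ by $\mathcal{O}_\Lambda$-linear maps, again via $g\circ h(x)=h(g^{-1}(x))$.

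The key step, and the one I expect to be the main obstacle, is to show that $M$ is a free $\mathcal{O}_\Lambda$-module of rank $\mu(f)$. I would argue this through the standard conservation of Milnor number. The relative critical locus is a complete intersection of codimension $n$ in $B\times\Lambda$, hence Cohen--Macaulay, and it is finite over the smooth base $\Lambda$, so the pushforward is Cohen--Macaulay of full dimension over a regular ring, hence free. Its fibre over $\lambda=0$ is $Q_f$, so the rank is $\mu(f)$. Over a one-dimensional base one can also argue more elementarily, since finite and torsion-free over a disc already gives freeness.

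Given freeness, I choose an $\mathcal{O}_\Lambda$-basis, so that each $g\in G$ acts by a $\mu\times\mu$ matrix $A_g(\lambda)$ with holomorphic entries. Then $\lambda\mapsto \operatorname{tr}A_g(\lambda)$ is continuous. Each value is a character value of a representation of a finite group, hence a sum of $\mu$ roots of unity of degree $|G|$, and these values form a finite set. A continuous function into a finite set is constant on the connected set $\Lambda$, so the character of the fibre representation $M/\mathfrak{m}_\lambda M$ does not depend on $\lambda$. At $\lambda=0$ this fibre is $Q_f$ with the representation $\mu_G(f)$.

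At a generic $\lambda\neq 0$, the fibre $M/\mathfrak{m}_\lambda M$ is the Jacobian algebra of $f_\lambda$ on $B$. This is a semilocal algebra that splits as the direct sum, over the critical points $c\in C$, of the local Jacobian algebras of $f_\lambda$ at $c$. Since every critical point is nondegenerate, each summand is $\mathbb{C}$, and the map $h\mapsto (h(c))_{c\in C}$ is an algebra isomorphism onto the functions on $C$. This map is $G$-equivariant: $G$ permutes $C$, and the action on both sides is $h\mapsto h\circ g^{-1}$. Therefore the fibre at such $\lambda$ is the representation $\theta$. Since characters determine representations of a finite group, $\theta\cong\mu_G(f)$.
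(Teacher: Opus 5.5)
Your argument is correct. The paper itself gives no proof of this statement: it quotes it, together with the preceding theorem, from Roberts \cite{10}, and uses it only as a black box to obtain Roberts' equality. So there is no in-paper argument to compare against.

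What you supply is the standard conservation-of-multiplicity proof, in three steps:
\begin{enumerate}
\item The relative Jacobian module $M$ of the family is finite over $\mathcal{O}_\Lambda$, Cohen--Macaulay of full dimension, hence free of rank $\mu(f)$, and it carries an $\mathcal{O}_\Lambda$-linear $G$-action.
\item Characters of a finite group take values in a finite set, so they are constant along such a family.
\item At a Morse parameter value, evaluation at the critical points identifies the fibre equivariantly with the functions on $C$.
\end{enumerate}
Compared with the bare citation, this shows that the only nontrivial input is freeness, i.e.\ conservation of the Milnor number. It also shows that neither the reality of the action nor the choice of group matters here; those hypotheses enter only through the existence of an invariant morsification, which is the previous theorem.

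Two small points should be stated explicitly.
\begin{itemize}
\item The ball $B$ must be chosen $G$-stable, otherwise $G$ does not act on $M$. A round ball works here, since the diagonal action by roots of unity used in the paper is unitary. It is also cleaner to work on the open ball, using that the critical locus stays away from $\partial B$.
\item Your one-parameter shortcut needs a reason for torsion-freeness. Since the quotient by $\partial f_\lambda/\partial x_1,\ldots,\partial f_\lambda/\partial x_n,\lambda$ is $Q_f$, which is finite-dimensional, this sequence is a system of parameters of the regular local ring $\mathcal{O}_{n+1}$. Hence it is a regular sequence, so $\lambda$ is a non-zero-divisor on $M$ near $\lambda=0$.
\end{itemize}
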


The last statement mean in particular that $\nu(f)$ is equal to the number of orbits of critical points in a invariant Morse approximation of $f$. 

Assume that a real $G$-action $\tau$ has no fixed points outside of origin, and denote $l(\tau)$ the smallest length of an orbit of a point outside of origin. By counting the critical points of an invariant morsification of $f$ we obtain the \textbf{Roberts' inequality} $\mu(f) \geq (\nu(f) -1) l(\tau) + 1$. If $G=\mathbb{Z}_p$ each orbit outside the origin has length $p$, so we in fact have an equality
\begin{equation}{\nonumber}
 \mu(f) = (\nu(f) -1) p + 1.
 \end{equation}
 We will refer to this equation as \textbf{Roberts' equality}.

\section{Real representation corresponding to an arbitrary representation}

For a group action $ \tau: g \circ (x_1,\ldots ,x_n) \longrightarrow (\chi_1 (g) x_1,\ldots , \chi_n (g) x_n)$ we will denote $\tau_{\mathbb{R}}$ the action on $\mathbb{C}^{2n}$ defined as  $g \circ (x_1,\ldots ,x_n, y_1, \ldots, y_n) \longrightarrow \\ (\chi_1 (g) x_1,\ldots , \chi_n (g) x_n, \bar{\chi}_1 (g) y_1,\ldots , \bar{\chi}_n (g) y_n)$. 

$\tau_{\mathbb{R}}$ is always a real action with the invariant quadratic form $\Sigma_i x_i y_i$. If $\tau$ has no fixed points outside of origin then neither does $\tau_{\mathbb{R}}$.

For a function $f:(\mathbb{C}^n,0) \longrightarrow (\mathbb{C},0)$ $f \oplus f$ is the function $f(x_1, \ldots, x_n) + f(y_1, \ldots, y_n)$.

 It is easy to see that if $f$ has an isolated singularity at the origin then so does $f \oplus f$, and $\mu(f \oplus f) = (\mu(f))^2$. Furthermore, if $f$ is invariant with respect to an action $\tau$ then $f \oplus f$ is invariant with respect to the action $\tau_{\mathbb{R}}$.

\begin{theorem} If $p_1, \ldots, p_{\mu}$ form a monomial basis for the Jacobian algebra of $f$ then $p_i(x) p_j (y), 1 \leq i,j \leq \mu$ form a monomial basis in the Jacobian algebra of  $f \oplus f$.
\end{theorem}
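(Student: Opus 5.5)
The plan is to identify the Jacobian algebra of $f \oplus f$ with a tensor product. The Jacobian ideal of $F = f(x)+f(y)$ is generated by $\frac{\partial f}{\partial x_i}(x)$ and $\frac{\partial f}{\partial y_i}(y)$. The first family involves only the $x$-variables and the second only the $y$-variables, so $J_F = J_f(x)\,\mathbb{C}\{x,y\} + J_f(y)\,\mathbb{C}\{x,y\}$.

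I would first pass to polynomial or formal models. Since $\mu(f)<\infty$, the ideal $J_f$ contains a power $\mathfrak{m}^N$ of the maximal ideal. Hence $Q_f = \mathbb{C}\{x\}/J_f = \mathbb{C}[x]/(J_f + \mathfrak{m}^N)$ is a quotient of a polynomial ring by an ideal of finite codimension. Likewise $J_F$ contains $\mathfrak{m}_x^N + \mathfrak{m}_y^N$, which contains $\mathfrak{m}_{x,y}^{2N}$. So $Q_F$ is also computed modulo high powers of the maximal ideal, and the convergent, formal and truncated polynomial settings all agree.

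The key step is then the algebraic identity
\[
\mathbb{C}[x,y]/(I(x)+I(y)) \;\cong\; \mathbb{C}[x]/I \otimes_{\mathbb{C}} \mathbb{C}[y]/I ,
\]
with $I = J_f + \mathfrak{m}^N$. This holds because $A/I \otimes B/K \cong (A\otimes B)/(I\otimes B + A\otimes K)$ for $\mathbb{C}$-algebras, by right exactness of the tensor product. Under this isomorphism, if $p_1,\dots,p_\mu$ is a basis of $Q_f$ (in $x$) and of its copy (in $y$), then $p_i(x)\otimes p_j(y)$ is a basis of the tensor product. This element corresponds to the class of $p_i(x)p_j(y)$, and the latter is again a monomial. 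Therefore the $\mu^2$ products form a monomial basis of $Q_{f\oplus f}$, which also recovers $\mu(f\oplus f)=\mu(f)^2$.

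The main obstacle is making the passage from analytic germs to the tensor product rigorous, since $\mathbb{C}\{x,y\}$ is not $\mathbb{C}\{x\}\otimes\mathbb{C}\{y\}$. I would handle this with the finite-determinacy truncation described above: every element of $\mathbb{C}\{x,y\}$ agrees modulo $\mathfrak{m}_{x,y}^{2N}\subset J_F$ with a polynomial. This reduces everything to polynomial rings, where the tensor identity is exact.
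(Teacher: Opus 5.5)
Your argument is correct, but it runs differently from the paper's. The paper proves only that the products $p_i(x)p_j(y)$ \emph{span} $Q_{f\oplus f}$. It starts from some monomial basis $q_k$ of $Q_{f\oplus f}$ and factors each $q_k=q_{k1}(x)q_{k2}(y)$. It then expands each factor in the $p_j$ modulo $J_f$; the remainders lie in $J_f(x)$ or $J_f(y)$, hence in $J_{f\oplus f}$. Linear independence then comes from a dimension count, which relies on $\mu(f\oplus f)=\mu(f)^2$ — a fact the paper states earlier as ``easy to see'' without proof.

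You instead establish the algebra isomorphism $Q_{f\oplus f}\cong Q_f\otimes_{\mathbb{C}}Q_f$ after truncating to polynomial rings. This gives spanning and independence simultaneously and derives $\mu(f\oplus f)=\mu(f)^2$ rather than presupposing it, so your proof is more self-contained. Because the isomorphism is induced by multiplication, it also makes immediate the equivariance used in the next theorem, where $Q_{f\oplus f}$ is treated as $Q_f\otimes Q_f^{*}$.

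The cost is the analytic-to-polynomial bookkeeping, which you handle correctly in substance but should make explicit in two places:
\begin{enumerate}
\item $I$ means the ideal of $\mathbb{C}[x]$ generated by the truncations $j^{N-1}(\partial f/\partial x_i)$ together with $(x)^N$ (equivalently, the preimage of $J_f$ in $\mathbb{C}[x]$).
\item $J_{f\oplus f}$ is generated in $\mathbb{C}\{x,y\}$ by these same polynomials in $x$ and in $y$. Hence the kernel of $\mathbb{C}[x,y]\to Q_{f\oplus f}$ is exactly the ideal generated by $I(x)$ and $I(y)$.
\end{enumerate}
Surjectivity of this map, which is what your last sentence states, is not by itself enough.
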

\begin{proof} Start with an arbitrary monomial basis ${q_1(x,y), \ldots , q_{\mu^2}(x,y)}$ for $Q_{f \oplus f}$. Each $q_i$ factors as as a product of monomials in $x$ and $y$: $q_i = q_{i1}(x) q_{i2}(y)$. For each $i$ $q_{i1} \notin J_f$, as otherwise $q_i \in J_{f \oplus f}$. Hence we have a decomposition $q_{i1} = \Sigma_{j}c_{i j} p_j (x)$. Similarly, each $q_{i2}$ is a linear combination of $p_j(y)$. Therefore $p_i(x) p_j (y)$ form a system of generators for $Q_{f \oplus f}$. As there are exactly $\mu^2$ of them, they form a basis.

\end{proof}

\begin{theorem}$\nu(f \oplus f) = k_1^2 + \ldots + k_m^2$, with $k_i$ being the multiplicities of different one-dimensional representations in the irreducible decomposition of $Q_{f }$.
\end{theorem}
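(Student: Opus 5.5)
We use the monomial basis of $Q_{f\oplus f}$ from the previous theorem and read off the characters of its elements. Because the action $\tau$ is diagonal and $f$ is invariant, $J_f$ is spanned by eigenvectors, so we can choose a monomial basis $p_1,\ldots,p_\mu$ of $Q_f$. Each monomial $p_i(x)$ is an eigenvector for $\tau$: under $g\circ h(x)=h(g^{-1}x)$ it is multiplied by a character $\psi_i(g)$, namely the product of the $\chi_k^{-1}(g)$ over the variables occurring in $p_i$ (with multiplicity). Group the basis by character. If $\rho_1,\ldots,\rho_m$ are the distinct one-dimensional representations occurring in $Q_f$, then exactly $k_s$ of the $p_i$ have $\psi_i=\rho_s$. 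Since $G=\mathbb{Z}_p$ is abelian, every irreducible representation is one-dimensional, so the $k_s$ are precisely the multiplicities in the irreducible decomposition.

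Next compute the character of $p_i(x)p_j(y)$ under $\tau_{\mathbb{R}}$. The $y$-variables transform by $\bar\chi_k=\chi_k^{-1}$, so the monomial $p_i(y)$ is multiplied by $\bar\psi_i(g)$. Hence $p_i(x)p_j(y)$ is multiplied by $\psi_i(g)\bar\psi_j(g)$. It is invariant iff $\psi_i=\psi_j$, because characters take values in roots of unity, so $\bar\psi_j=\psi_j^{-1}$.

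Two points justify using this basis to compute the invariant subspace. First, by the previous theorem the $p_i(x)p_j(y)$ form a basis of $Q_{f\oplus f}$. Second, they are simultaneous eigenvectors for the induced action on $Q_{f\oplus f}$; this is well defined because $J_{f\oplus f}$ is $\tau_{\mathbb{R}}$-invariant, since $f\oplus f$ is $\tau_{\mathbb{R}}$-invariant. In an eigenbasis the trivial-isotypic component is spanned by the basis vectors with trivial character. Therefore $\nu(f\oplus f)$ equals the number of pairs $(i,j)$ with $\psi_i=\psi_j$, which is $\sum_s k_s^2$.

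The step needing the most care is the eigenbasis claim: a monomial basis must have each element spanning a weight line modulo $J_f$. This holds because the induced action is diagonal on monomials and preserves $J_f$, so the class of a monomial is again an eigenvector with the monomial's character. We also have to keep the sign convention $h(g^{-1}x)$ consistent between the $x$- and $y$-parts. This is harmless, since inverting the characters in both factors does not change the condition $\psi_i=\psi_j$.
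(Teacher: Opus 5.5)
Your proof is correct and follows the same route as the paper: both use the product basis $p_i(x)p_j(y)$ from the previous theorem to identify $Q_{f\oplus f}$ with $Q_f\otimes\overline{Q_f}$, where the second factor carries the conjugate characters. The paper finishes with $V\otimes V^{*}\cong \mathrm{Hom}(V,V)$ and the dimension of the space of equivariant endomorphisms, while you count the pairs $(i,j)$ with $\psi_i=\psi_j$ directly, which also checks explicitly that the vector-space identification is equivariant, a point the paper leaves implicit.
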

\begin{proof} As proven in the previous theorem $Q_{f \oplus f}$ is isomorphic ( as a vector space) to the tensor product of  $Q_f$ with itself. On the second factor of this tensor product (i.e. on monomials in $y_i$) $G$ acts by conjugate characters, so the action on the group on the second factor can be identified with the action on the conjugate space $Q^{*}_f$. According to a well-known result in representation theory (see, e.g., \cite{11}, pp. 433--434) for each finite-dimensional representation $V$ the representation $V \otimes V^{*}$is naturally isomorphic to  $Hom(V, V)$, and the invariant subspace of  $V \otimes V^{*}$  is isomorphic to the space of equivariant endomorphisms of $V$. The dimension of the space of equivariant endomorphisms is $k^2_1 + \ldots + k^2_m$ with  $k_i$ being the multiplicities of irreducible representations in the decomposition of $V$.

\end{proof}
 
 \section{Equivariant Milnor number of a $\mathbb{Z}_p$-invariant singularity}
 
Consider the Milnor fiber $V_f$ of a $G$-invariant function $f$ with an isolated critical point at $0$. $G$-action on $\mathbb{C}^n$  induces a linear action $\psi_G(f)$ on the homology group $H_{n-1}(V_f)$. Wall (\cite{12}) has proven that

\begin{equation}{\nonumber}
\psi_G(f) = (det(\tau))^{-1} \otimes \mu_G(f).
 \end{equation}{\nonumber}

Let $W$ denote the restriction of the regular representation of $\mathbb{Z}_p$ to $\{x_1 + \ldots + x_p=0\}$ and $\tilde{\mathbb{C}}$ denote the trivial one-dimensional representation. Then (see \cite{13}) for any $\mathbb{Z}_p$-invariant function germ $f$ with an isolated critical point  $\psi_{\mathbb{Z}_p}(f) = a \; \tilde{\mathbb{C}} + b \; W$ with some nonnegative $a,b \: \in \; \mathbb{Z}$ 

Combining this with Wall's equation we obtain $\mu_{\mathbb{Z}_p}(f) = a \; det (\tau) + b \; (W \otimes det(\tau))$. 
Since all irreducible representations except the trivial one occur in $W$ with multiplicity 1, the multiplicities of one-dimensional representations in $\mu_{\mathbb{Z}_p}(f)$ are $a$ for $det (\tau)$ and $b$ -- for all the remaining one-dimensional representations of $\mathbb{Z}_p$. 

\section{Proof of main theorem}

Let $f$ be a germ equivariantly stable with respect to a linear $\mathbb{Z}_p$-action $\tau$ for some prime p. 
Terms of degree 2 in the Taylor series of a stable germ must add up to a quadratic form of maximal rank  $rk(\tau)$. According to the splitting lemma in some local system of coordinates  $f$ is equal to $x_1^2 + \ldots + x_{rk(\tau)}^2 + h(x_{rk(\tau)+1}, \ldots, x_n)$, with $j^2 (h)=0$. Then $\mu(f) = \mu(h)$, $\mu(h) \geq 2^{n-rk(\tau)}$ (as $h$ has no terms of degree 2 in its Taylor series).  The subspace $\{x_1= \ldots = x_{rk(\tau)} = 0\}$ is invariant (since it is the kernel of an invariant bilinear form). $\mathbb{Z_p}$ acts on this subspace with no fixed points: a square of any invariant variable is an invariant function, and therefore an invariant bilinear form of maximal rank has no invariant vectors in its kernel.

\medskip

Consider the germ $h \oplus h$ as defined in section 5.  The germ $h \oplus h$ is invariant with respect to a real action with no fixed points outside of origin, so the Roberts' equality can be applied to it: $\mu(h \oplus h) = (\nu(h \oplus h) - 1)p + 1$. $\nu(h \oplus h)$ is equal to $a^2 + b^2 (p-1)$  for some $a,b$ according to sections 5 and 6, so
$\mu(h \oplus h) = \mu^2 (h) = (a^2 - 1)p + b^2 p (p-1) +1$. 

\medskip

On the other hand, using the equation  $\mu_{\mathbb{Z}_p}(h) = a \; det (\tau) + b \; (W \otimes det (\tau))$ the Milnor number of $h$ can be immediately calculated as $a + b(p-1)$ and therefore $\mu^2(h) = a^2 + 2 a b (p-1) + b^2 (p^2 - 2p +1)$.

\medskip

Since the Jacobian algebras of $f$ and $h$ are isomorphic $\nu(h) = \nu(f) = 1$. As $\mu_{\mathbb{Z}_p}(h) = a \; det (\tau) + b \; (W \otimes det (\tau))$, either $det(\tau) \neq 1, b=1$, or  $det(\tau) = 1, a=1$. Let us consider both of these possibilities.

\medskip

1)Substituting $b=1$ into previously obtained equations for $\mu(h \oplus h)$ yields $\mu^2 (h) =  (a^2 - 1)p + p (p-1) +1 = a^2 + 2 a (p-1) + (p^2 - 2p +1)$. After some trivial manipulations we obtain $a^2 (p-1) - 2a(p-1) = 0$, i.e. $a(a-2)(p-1)=0$. Therefore $a=0$ or $a=2$. The former yields $\mu(h)=\mu(f) = p-1$, the latter $\mu(h)=\mu(f) = p+1$.

2)Similarly we substitute $a=1$  and obtain $b^2 p (p-1) +1 = 1 + 2 b (p-1) + b^2 (p^2 - 2p +1)$. This simplifies to $b^2 (p-1) - 2b(p-1)=0$, i.e. $b=0$ or $b=2$. Therefore $\mu(f) = 1$ (in this case $f$ is a Morse germ) or $\mu(f) = 2p-1$.

\medskip

To sum up these calculations, the Milnor number of a gem $f$ equivariantly stable with respect to a $\mathbb{Z}_p$-action $\tau$ is at most $p+1$ if $det(\tau) \neq 1$ and at most $2p-1$ if $det(\tau) = 1$. Since $\mu(f) = \mu(h) \geq 2^{n-rk(\tau)}$ this concludes the proof of the main theorem.

\end{document}